\newtheorem{theorem}{Theorem}[section]
\newtheorem{lemma}{Lemma}[section]
\newtheorem{definition}{Definition}[section]
\newtheorem{proposition}[theorem]{Proposition}
\newtheorem{corollary}[theorem]{Corollary}
\newtheorem{example}{Example}[section]
\newtheorem{remark}{Remark}[section]
\newcommand{\eop }{ \hfill $\Box$ }
\begin{document}

\begin{center}
{\Large Martingales on Principal Fiber Bundles\\}

\end{center}

\vspace{0.3cm}

\begin{center}
{\large Pedro Catuogno}\\
\textit{Departamento de
 Matem\'{a}tica, Universidade Estadual de Campinas,\\ 13.081-970 -
 Campinas - SP, Brazil. e-mail: pedrojc@ime.unicamp.br}\\
\vspace{0.5cm}
{\large Sim\~ao N. Stelmastchuk}\\

\textit{Universidade Federal do Paraná,\\ 86900-000 -
 Jandaia do Sul- PR, Brazil. e-mail:simnaos@ufpr.br}
\end{center}

\vspace{0.3cm}

\begin{abstract}

	Let $P(M,G)$ be a principal fiber bundle, let $\omega$ be a connection form on $P(M,G)$, and consider a projectable connection $\nabla^{P}$ on $P(M,G)$. The aim of this work is to determine the $\nabla^{P}$-martingales in  $P(M,G)$. Our results allow establishing new characterizations of harmonic maps from Riemannian manifolds to principal fiber bundles.
\end{abstract}

\noindent {\bf Key words:} martingales; principal fiber bundles;
harmonic maps.

\vspace{0.3cm} \noindent {\bf MSC2000 subject classification:}
 53C43, 60H30, 60G48.

\bibliographystyle{apalike}
\section{Introduction}

\noindent This article is concerned with the characterization of
martingales in a principal fiber bundle $P(M,G)$ which is endowed with a
connection form $\omega$ and a $G$-invariant symmetric connection
$\nabla^{P}$. Following to N. Abe and K. Hasegawa \cite{abe}, we
says that $\nabla^{P}$ is projectable if there exists an unique
symmetric connection $\nabla^{M}$ on $M$ such that for any vector
fields $X$ and $Y$ on $M$
\[
\mathbf{h}\nabla^{P}_{X^h}Y^h = (\nabla^{M}_{X}Y)^h,
\]
where $\mathbf{h}$ denotes the horizontal projection which is
associated to $\omega$ and $-^h$ denotes the associated horizontal
lift. Let $A$ and $T$ be the fundamental tensors associated to
$\pi : P \rightarrow M$, see (\ref{T}) and (\ref{A}) below.

In this situation, we prove the following results:

\noindent 1) Let $Y$ be a continuous semimartingale with values in
$P$. Then $Y$ is a $\nabla^{P}$-martingale if and only if
\begin{equation}
\int \omega \;\delta Y - \frac{1}{2}\int (\nabla^{P}\omega)(dY,dY)
\end{equation}
is a local martingale and
\begin{equation}
\int \alpha \;d^{\nabla^{M}} \pi \circ Y + \frac{1}{2} \int \alpha
\circ \pi_* \circ (2A^S+T^S)(dY,dY)
\end{equation}
is a local martingale for all $\alpha \in \Gamma(T^*M)$.

\noindent 2) Let $N$ be a Riemannian manifold with metric $g$ and
$F:N \rightarrow P$ be a smooth map. Then $F$ is a harmonic map if
and only if
\begin{equation}
d^{*}F^{*} \omega = trF^{*}(\nabla^{P}\omega)
\end{equation}
and
\begin{equation}
\tau_{\pi \circ F} = -\mathrm{tr}~ \pi_* \circ (2A^S+T^S) \circ
F_* \otimes F_*.
\end{equation}

In \cite{cat-stel}, the present authors give similar
characterizations in the special case of the frame bundle.

The motivation for this work is to understand the
stochastic differential geometry of principal fiber bundles. In
fact, we are interested in study the rich interplay between
differential geometry and stochastic calculus.

This paper is organized in the following way: In section 2 we
review some fundamental facts on the differential geometry of
principal fiber bundles and stochastic calculus on manifolds. In
section 3 we prove our principal results. Finally, we apply our
results to study the stochastic differential geometry of
Kaluza-Klein theory.

\section{Preliminaries}

We begin by recalling some fundamental facts on the differential
geometry of principal fiber bundles and stochastic calculus on
manifolds. We shall use freely concepts and notations of S.
Kobayashi and N. Nomizu \cite{kobay}, M. Emery \cite{emery1} and
P. Meyer \cite{meyer1}.

Let $P(M,G)$ be a principal fiber bundle with projection $\pi: P
\rightarrow M$. Let us denote the right action of $G$ on $P$ by
$R_g (p)=pg$ for $p \in P$ and $g \in G$. A horizontal lift $H$ in
$P(M,G)$ is a smooth family of applications $H_{p}:T_{\pi(p)}M
\rightarrow T_{p}P$ such that $\pi_* \circ H_{p}=Id_{T_{\pi(p)}M}$
for all $p \in P$ and $(R_g)_*H_p=H_{pg}$ for all $p \in P$ and $g
\in G$. The horizontal lift $H$ determines a unique decomposition
of each tangent space $T_{p}P$  which is the direct sum of the vertical
subspace $V_{p}P = Ker(\pi_{*}(p))$ and the horizontal subspace
$H_{p}P = Im(H_p)$ at $p \in P$. This decomposition naturally
defines the horizontal lifts of $X \in T_{\pi(p)}M$ as the unique
tangent vector $X^h=H_p(X) \in H_{p}P$ such that $\pi_{*}(X^h) =
X$. We denote by $\mathfrak{g}$ the Lie algebra of $G$. For $B \in
\mathfrak{g}$, the right action of $G$ into $P$ defines a
$1$-parameter transformation group on $P$ and induces a vector
field $B^{*}$ on $P$. We call $B^{*}$ the fundamental vector field
corresponding to $B$, which is a vertical vector field. For each
$p \in P$, the linear mapping $\sigma_p : \mathfrak{g} \rightarrow
V_{p}P$ defined by $\sigma_p(B)=B^*_p$ is an isomorphism.

Let us denote by $\mathbf{h}U_{p}$ and $\mathbf{v}U_{p}$
the horizontal and vertical parts of $U \in T_{p}P$, respectively.
The connection form $\omega : TP \rightarrow \mathfrak{g}$ is
defined by
\[
\omega(U_p) = B,
\]
where $\mathbf{v}U_p = B^{*}_p$ at $p \in P$.

We observe that the connection form is a
$\mathfrak{g}$-valued $1$-form on $P$ satisfying the following
conditions:
\begin{enumerate}
\item $\omega(B^*) = B$ for $B \in \mathfrak{g}$,

\item $R_g^*\omega=ad_{g^{-1}}\omega$ for $g \in G$,
\end{enumerate}
where $ad_{g}: \mathfrak{g} \rightarrow \mathfrak{g}$ is defined by
$ad_g(B)=(I_g)_*B$, $I_g$ being the inner automorphism of $G$,
$I_g(x)=gxg^{-1}$ for all $g \in G$.

Conversely, given a $\mathfrak{g}$-valued $1$-form $\omega$ on $P$, which satisfies
the above conditions, there is an unique horizontal
lift $H$ in $P$ that its connection form is $\omega$. For $p \in P$
and $X \in T_{\pi(p)}M$, we have that $H_p(X) \in T_pP$ is the
unique solution of $\pi_*(H_p(X))=X$ and $\omega_p(H_p(X))=0$.

The curvature form $\Omega$ of $\omega$ is the
$\mathfrak{g}$-valued $2$-form on $P$ defined by
\[
\Omega(U, V)=d\omega(\mathbf{h}U,\mathbf{h}V),
\]
where $U$ and $V$ are vector fields on $P$.


Throughout the paper we adopt the following convention: a connection on a manifold means a torsion free covariant
derivative operator on the tangent bundle.

\begin{definition} Let $\nabla^{P}$ be a connection on $P(M,G)$. We say that $\nabla^{P}$ is $G$-invariant if the right
translations $R_{g}$ are affine for all $g \in G$.

Let $\omega$ be a connection form on $P(M,G)$. A
$G$-invariant connection $\nabla^{P}$ on $P(M,G)$ is projectable
if $\mathbf{h}\nabla^{P}_{X^h}Y^h$ is projectable for all vector
fields $X$ and $Y$ on $M$.
\end{definition}

\begin{proposition}\label{prop1}
Let $P(M,G)$ be a principal fiber bundle, and consider a
$G$-invariant connection $\nabla^{P}$ on $P(M,G)$. Then $\nabla^{P}$ is
projectable if and only if there exist an unique connection
$\nabla^{M}$ on $M$ such that
\[
\mathbf{h}\nabla^{P}_{X^h}Y^h = (\nabla^{M}_{X}Y)^h
\]
for all vector fields $X$ and $Y$ on $M$.
\end{proposition}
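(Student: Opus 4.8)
The plan is to establish both directions of the equivalence by exploiting the fact that the horizontal lift $X \mapsto X^h$ together with the horizontal distribution gives a canonical way to transport objects between $M$ and $P$. For the ``if'' direction the statement is essentially immediate: given $\nabla^M$ with $\mathbf{h}\nabla^P_{X^h}Y^h = (\nabla^M_X Y)^h$ for all $X, Y$, the quantity $\mathbf{h}\nabla^P_{X^h}Y^h$ is a horizontal lift by definition, so $\nabla^P$ is projectable. The substance is in the ``only if'' direction: assuming each $\mathbf{h}\nabla^P_{X^h}Y^h$ is (the horizontal lift of) a vector field on $M$, I must construct $\nabla^M$, verify it is a connection, check torsion-freeness, and prove uniqueness.

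**First I would** define $\nabla^M_X Y$ pointwise by the requirement $(\nabla^M_X Y)^h = \mathbf{h}\nabla^P_{X^h}Y^h$; this makes sense precisely because projectability says the right-hand side is a horizontal lift, and a horizontal lift uniquely determines the downstairs vector via $\pi_*$. The well-definedness and the $C^\infty(M)$-bilinearity-plus-Leibniz axioms of a connection then follow by transporting the corresponding properties of $\nabla^P$ through the lift, using the standard bracket relations for horizontal lifts, namely $[X^h, Y^h] = [X, Y]^h + (\text{vertical})$ and $(fX)^h = (f\circ\pi)X^h$, together with $X^h(f\circ\pi) = (Xf)\circ\pi$. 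In particular $\nabla^P_{(f\circ\pi)X^h}Y^h = (f\circ\pi)\nabla^P_{X^h}Y^h$ projects to $f\nabla^M_X Y$, and $\nabla^P_{X^h}((f\circ\pi)Y^h) = (X^h(f\circ\pi))Y^h + (f\circ\pi)\nabla^P_{X^h}Y^h$ projects to $(Xf)Y + f\nabla^M_X Y$ after applying $\mathbf{h}$ and $\pi_*$, since $Y^h$ is already horizontal. Torsion-freeness of $\nabla^M$ comes from torsion-freeness of $\nabla^P$: $\mathbf{h}\nabla^P_{X^h}Y^h - \mathbf{h}\nabla^P_{Y^h}X^h = \mathbf{h}[X^h,Y^h] = [X,Y]^h$, whose projection is $[X,Y]$.

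**The one subtlety I would be careful about** — and the step I expect to be the main obstacle — is checking that $\nabla^M_X Y$ at a point $x \in M$ is independent of the choice of $p \in \pi^{-1}(x)$ used to evaluate $(\nabla^P_{X^h}Y^h)_p$. Here $G$-invariance of $\nabla^P$ is exactly what is needed: since $R_g$ is affine and $(R_g)_* X^h = X^h$ (horizontal lifts are $R_g$-invariant by the axioms on $H$ recalled in the preliminaries), we get $(R_g)_*(\nabla^P_{X^h}Y^h)_p = (\nabla^P_{X^h}Y^h)_{pg}$, and since $\pi \circ R_g = \pi$, the projected vector $\pi_*(\nabla^P_{X^h}Y^h)$ is constant along each fiber; hence $\nabla^M_X Y$ is a genuine vector field on $M$. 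Finally, uniqueness of $\nabla^M$ is forced: any connection satisfying $(\nabla^M_X Y)^h = \mathbf{h}\nabla^P_{X^h}Y^h$ is determined on all vector fields by its values on a local frame, and those values are pinned down by the displayed equation together with $C^\infty$-linearity and the Leibniz rule, so two such connections agree.
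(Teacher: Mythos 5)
Your proposal is correct and follows essentially the same route as the paper: define $\nabla^M_X Y$ as the projection of $\nabla^P_{X^h}Y^h$ and verify the connection axioms via $(fX)^h=(f\circ\pi)X^h$ and $X^h(f\circ\pi)=(Xf)\circ\pi$. You are in fact somewhat more complete than the paper, which omits the fiber-independence argument (your use of $G$-invariance and $(R_g)_*X^h=X^h$), the torsion-freeness check, and the trivial converse direction.
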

\begin{proof}
Let $\nabla^{P}$ be a projectable connection. We define
$\nabla^{M}_{X}Y = \pi_{*} \nabla^{P}_{X^{h}}Y^{h}$, and so clearly
$\mathbf{h}\nabla^{P}_{X^{h}}Y^{h} = (\nabla^{M}_{X}Y)^{h}$. It
remains to prove that $\nabla^{M}$ is a connection. Since
$(fX)^{h} = (f \circ \pi)X^{h}$, for all $f \in C^{\infty}(M)$,
\begin{eqnarray*}
\nabla^{M}_{gX}(fY)
& = & \pi_{*} (\nabla^{P}_{(g \circ \pi)X^{h}}(f \circ \pi)Y^{h})\\
& = & \pi_{*}((g \circ \pi)X^{h}(f \circ \pi)Y^{h} + (g \circ \pi)(f \circ \pi)\nabla^{P}_{X^{h}}Y^{h})\\
& = & gX(f)Y + gf\nabla^{M}_{X}Y.
\end{eqnarray*}
The uniqueness follows from the definition. \eop
\end{proof}

Let $\nabla^{P}$ be a connection on $P(M,G)$, and take a connection form $\omega$ on $P(M,G)$. Following to B. O'Neill \cite{oneill}, we describe the geometrical quantities of our
interest in terms of the fundamental tensors $T$ and $A$. They are defined
by
\begin{equation}\label{T}
T_{U}V = \mathbf{h}\nabla^{P}_{\mathbf{v}U} \mathbf{v}V + \mathbf{v}\nabla^{P}_{\mathbf{v}U} \mathbf{h}V
\end{equation}
and
\begin{equation}\label{A}
A_{U}V = \mathbf{v}\nabla^{P}_{\mathbf{h}U} \mathbf{h}V +
\mathbf{h}\nabla^{P}_{\mathbf{h}U} \mathbf{v}V,
\end{equation}
for $U$ and $V$ vector fields on $P$.

We observe that the vanishing of $T$ means that the
fibers of $P(M,G)$ are totally geodesic.

\begin{lemma}
Let $P(M,G)$ be a principal fiber bundle, and consider a connection
form $\omega$ and a projectable connection $\nabla^{P}$ on $P(M,G)$. If $X$ and $Y$ are vector fields on $M$
and $B,C \in \mathfrak{g}$, then we have the following equations:
\begin{equation}\label{grupo1}
\left\{
\begin{array}{rcl}
T_{B^*}C^*  & = & T_{C^*}B^* \\
T_{B^*}X^h  & = &  \omega(\nabla^{P}_{X^h}B^*)^* \\
A_{X^h}Y^h  & = & -2\Omega(X^h, Y^h)^* + A_{Y^h}X^h \\
A_{X^h}B^*  & = &
\nabla^{P}_{X^h}B^*-\omega(\nabla^{P}_{X^h}B^*)^*+[X^h,B^*]
\end{array}
\right.
\end{equation}
and
\begin{equation}\label{grupo2}
\left\{
\begin{array}{rcl}
\nabla^{P}_{B^*}C^* & = & \widehat{\nabla}_{B^*}C^* + T_{B^*}C^*\\
\nabla^{P}_{B^*}X^h & = & \mathbf{h}\nabla^{P}_{B^*}X^h +
T_{B^*}X^h\\
\nabla^{P}_{X^h}B^* & = & \mathbf{v}\nabla^{P}_{X^h}B^*  + A_{X^h}B^*\\
\nabla^{P}_{X^h}Y^h & = & \mathbf{h}\nabla^{P}_{X^h}Y^h +
A_{X^h}Y^h,
\end{array}
\right.
\end{equation}
where $\widehat{\nabla}$ is the induced connection by $\nabla^P$
in the fibers.
\end{lemma}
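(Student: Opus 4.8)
The plan is to verify each of the eight identities by unwinding the definitions \eqref{T} and \eqref{A} of the fundamental tensors $T$ and $A$, together with the basic facts about the connection form $\omega$, the curvature form $\Omega$, and the fact that fundamental vector fields $B^*$ are vertical while horizontal lifts $X^h$ are horizontal. I would organise the work into two groups, exactly as the statement is grouped: first the four identities in \eqref{grupo1} which relate the tensors to $\omega$, $\Omega$ and the Lie bracket, and then the four decomposition identities in \eqref{grupo2}, the last of which is essentially the hypothesis that $\nabla^P$ is projectable (Proposition~\ref{prop1}).

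First I would treat \eqref{grupo2}, since these are the most mechanical: for each pair of vector fields one of horizontal/vertical type, write $U=\mathbf{h}U+\mathbf{v}U$ and $V=\mathbf{h}V+\mathbf{v}V$, substitute into $\nabla^P_U V$, and split into horizontal and vertical components. For two vertical fields $B^*,C^*$ the horizontal part of $\nabla^P_{B^*}C^*$ is $\mathbf{h}\nabla^P_{\mathbf{v}B^*}\mathbf{v}C^* = T_{B^*}C^*$ by \eqref{T}, and the vertical part is by definition $\widehat{\nabla}_{B^*}C^*$, the connection induced on the fibers; this gives the first line. For $B^*$ and $X^h$ the vertical part of $\nabla^P_{B^*}X^h$ is $\mathbf{v}\nabla^P_{\mathbf{v}B^*}\mathbf{h}X^h = T_{B^*}X^h$, giving the second line; symmetrically, for $X^h$ and $B^*$ the vertical part of $\nabla^P_{X^h}B^*$ together with the horizontal-to-vertical piece of $A$ gives the third line, reading off \eqref{A}. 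The fourth line is just $\nabla^P_{X^h}Y^h = \mathbf{h}\nabla^P_{X^h}Y^h + \mathbf{v}\nabla^P_{X^h}Y^h$ and $\mathbf{v}\nabla^P_{\mathbf{h}X^h}\mathbf{h}Y^h = A_{X^h}Y^h$.

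Then I would turn to \eqref{grupo1}. For the symmetry $T_{B^*}C^* = T_{C^*}B^*$, I would use that $\nabla^P$ is torsion free, so $\nabla^P_{B^*}C^* - \nabla^P_{C^*}B^* = [B^*,C^*] = [B,C]^*$ is vertical; taking horizontal parts kills the bracket and gives $\mathbf{h}\nabla^P_{B^*}C^* = \mathbf{h}\nabla^P_{C^*}B^*$, which is the claim. The same torsion-free trick applied to $X^h,Y^h$ gives $\nabla^P_{X^h}Y^h - \nabla^P_{Y^h}X^h = [X^h,Y^h]$, whose vertical part is $-2\Omega(X^h,Y^h)^*$ (since $\mathbf{v}[X^h,Y^h] = \sigma(\omega([X^h,Y^h]))$ and the structure equation gives $\omega([X^h,Y^h]) = -d\omega(X^h,Y^h)\cdot 2 = -2\Omega(X^h,Y^h)$ on horizontal vectors); comparing vertical parts and recalling $\mathbf{v}\nabla^P_{X^h}Y^h = A_{X^h}Y^h$ yields the third identity. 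For $T_{B^*}X^h = \omega(\nabla^P_{X^h}B^*)^*$, I would combine the second line of \eqref{grupo2}, torsion-freeness, and the fact that $[B^*,X^h]$ is horizontal (because the flow of $B^*$ is $R_{\exp tB}$, which preserves horizontal subspaces), so that the vertical part of $\nabla^P_{B^*}X^h$ equals the vertical part of $\nabla^P_{X^h}B^*$, i.e. $\sigma(\omega(\nabla^P_{X^h}B^*))$. Finally the identity for $A_{X^h}B^*$ is obtained by adding the vertical and horizontal pieces of $\nabla^P_{X^h}B^*$: writing $\nabla^P_{X^h}B^* = \mathbf{v}\nabla^P_{X^h}B^* + \mathbf{h}\nabla^P_{X^h}B^*$ and $\mathbf{h}\nabla^P_{X^h}B^* = A_{X^h}B^* - \mathbf{v}(\cdot)$ from \eqref{A}, together with the previous relations for $\mathbf{v}\nabla^P_{X^h}B^* = \omega(\nabla^P_{X^h}B^*)^*$ and for $[X^h,B^*]$.

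The main obstacle I anticipate is bookkeeping the sign and factor of $2$ in the curvature term: it requires care to relate $[X^h,Y^h]$ to $d\omega$ via $d\omega(X^h,Y^h) = X^h(\omega(Y^h)) - Y^h(\omega(X^h)) - \omega([X^h,Y^h]) = -\omega([X^h,Y^h])$ (since $\omega$ annihilates horizontal lifts), hence $\omega([X^h,Y^h]) = -\Omega(X^h,Y^h)$, and then to see why the coefficient in $A_{X^h}Y^h - A_{Y^h}X^h$ comes out as $-2\Omega(X^h,Y^h)^*$ rather than $-\Omega(X^h,Y^h)^*$ — this hinges on whether the paper's convention has $\Omega(U,V) = d\omega(\mathbf{h}U,\mathbf{h}V)$ read with or without a normalising factor, and I would pin that down from the displayed definition of $\Omega$ before committing to the constant. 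Everything else is routine splitting into horizontal and vertical components and invoking torsion-freeness of $\nabla^P$.
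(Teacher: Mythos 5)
Your proof is correct and takes essentially the same approach as the paper, whose own proof merely records that (\ref{grupo2}) is a direct computation and that (\ref{grupo1}) follows from the identities $\mathbf{v}U=\omega(U)^{*}$ and $\mathbf{v}[X^{h},Y^{h}]=-2\Omega(X^{h},Y^{h})^{*}$. The factor of $2$ you flagged is fixed by the Kobayashi--Nomizu convention the paper adopts, namely $d\omega(U,V)=\frac{1}{2}\{U\omega(V)-V\omega(U)-\omega([U,V])\}$, so that $\omega([X^{h},Y^{h}])=-2\,d\omega(X^{h},Y^{h})=-2\,\Omega(X^{h},Y^{h})$ on horizontal lifts.
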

\begin{proof}
An easy computation shows the equations (\ref{grupo2}). To prove
(\ref{grupo1}), we recall that $\mathbf{v}U= \omega(U)^{*}$ for
all $U \in TP$ and $\mathbf{v}[X^{h}, Y^{h}] = -2\Omega(X^h,
Y^h)^*$ for all $X,Y \in TM$. \eop
\end{proof}

From (\ref{grupo1}) it follows  that the horizontal
distribution $\{H_pP:p \in P\}$ is integrable if and only if
$A_{X^h}Y^h=A_{Y^h}X^h$ for all $X$ and $Y$ vector fields on $M$.

\begin{example}
Let $M$ be a differentiable manifold and choose a connection $\nabla$ 
on $M$. We consider the frame bundle $BM(M,GL(\mathbb{R}^n))$, which
is a principal fiber bundle with base $M$ and structure group
$GL(\mathbb{R}^n)$.

The canonical lift $\nabla^c$ and horizontal lift
$\nabla^h$ are projectable connections on $BM$ with the projection
$\nabla$. In the book of L. Cordero et al. \cite{cordero} we find
a survey of elementary properties of these connections. Let
$X$ and $Y$ be vector fields on $M$ and take $B,C \in \mathfrak{gl}(n,
\mathbb{R}^{n})$. The canonical lift $\nabla^{c}$ and
horizontal lift $\nabla^{H}$ are completely defined by the
relations:

\begin{equation}\label{canonical}
\left\{
\begin{array}{rcl}
\nabla_{B^{*}}^{c} C^{*} & = & (BC)^{*} \\
\nabla_{B^{*}}^{c} X^h & = & 0 \\
\nabla_{X^h}^{c} B^{*} & = & 0  \\
\nabla_{X^h}^{c} Y^h & = & (\nabla_{X} Y)^h + \gamma( R( - , X)Y)
\end{array}
\right.
\end{equation}
and
\begin{equation}\label{horizontal}
\left\{
\begin{array}{rcl}
\nabla_{B^{*}}^{H} C^{*}  & = & (BC)^{*}  \\
\nabla_{B^{*}}^{h} X^h & = & 0 \\
\nabla_{X^h}^{H} B^{*} & = & 0 \\
\nabla_{X^h}^{H} Y^h & = & (\nabla_{X} Y)^h,
\end{array}
\right.
\end{equation}
where $R$ is the curvature tensor of $\nabla$ and $\gamma S$ is
the vertical lift defined by $\gamma S (p) = (p^{-1}\circ S \circ p)^*(p)$ for $p \in BM$.

We observe that, in both cases, $T=0$ and $A_{X^h}B^*=0$,
so $\pi$ is affine.
\end{example}

\begin{remark}
In the 1920s T. Kaluza and O. Klein proposed the use of spaces of
dimension higher than four in order to unify general relativity
and what we now call Yang-Mills theories. From a mathematical view
point Kaluza-Klein theory is the differential geometry of a
principal fiber bundle with invariant Riemannian metric (see
\cite{coquereaux} for more details). In this context is
fundamental the following reduction theorem: Let $P(M,G)$ be a
principal fiber bundle and assume that $k$ is a $G$-invariant Riemannian
metric on $P$, namely $R_g$ is an isometry for all $g \in G$. Let
$\mathcal{M}_{ad}(\mathfrak{g})$ denote the set of metrics on
$\mathfrak{g}$ invariant by $ad_{g}$ for all $g \in G$. Then there
exist
\begin{enumerate}
\item $h$ a Riemannian metric on $M$,

\item $\omega$ a connection form on $P$,

\item $F: M \rightarrow \mathcal{M}_{ad}(\mathbf{g})$  a smooth
function
\end{enumerate}
such that
\begin{equation}\label{KK}
k(U,V)=h(\pi_*(U),\pi_*(V))+F \circ \pi (\omega(U), \omega(V))
\end{equation}
for all $U$ and $V$ vector fields on $P$.

Reciprocally, given a Riemannian metric $h$ on $M$, a
connection form $\omega$ on $P$, and a smooth function $F: M \rightarrow
\mathcal{M}_{ad}(\mathbf{g})$, we have that
(\ref{KK}) defines the unique $G$-invariant Riemannian metric $k$ on
$P$.

It is easy to check that $\pi : P \rightarrow M$ is a
Riemannian submersion and the connection form $\omega$ is
associated to the horizontal lift $H^k$. For $X \in TM$, $H^k(X)$
is completely determined by $\pi_*(H^k(X))=X$ and $H^k(X)$ is
orthogonal to $VP$.

It is clear that the Riemannian connection $\nabla^k$ associated to $k$ is a projectable connection with projection
$\nabla^h$, the Riemannian connection associated to $h$.

\end{remark}

\begin{example}
Let $P(M,G)$ be a principal fiber bundle with a connection
form $\omega$ on $P$. Assume that $k_0$ is an $ad(G)$-invariant metric on $\mathfrak{g}$ and that 
$h$ is a Riemannian metric on $M$. We consider the
$G$-invariant Riemannian metric $k$ on $P$ defined by
\begin{equation}\label{KK1}
k(U,V)=h(\pi_*(U),\pi_*(V))+k_0(\omega(U), \omega(V))
\end{equation}
for all $U$ and $V$ vector fields on $P$.

An easy computations shows that $\pi : P \rightarrow M$
is a Riemannian submersion with $T=0$ and
\begin{eqnarray*}\label{KK1}
A_{X^h}X^h & = 0 \\
A_{X^h}B^* & = & -\frac{1}{2}k_0 (B, \Omega(-,
X^h))^{\sharp}
\end{eqnarray*}
for all $X$ vector field on $M$ and $B \in \mathfrak{g}$ (see for
instance \cite{abe} and \cite{oneill}).
\end{example}

Let $(\Omega, (\mathcal{F}_t),\mathbb{P})$ be a filtered probability space, and let $M$ be a smooth manifold provided with a connection $\nabla$. Furthermore, assume that $X$ is a continuous semimartingale with values in $M$, $\alpha$ is section of $TM^*$, and $b$ is a section of $T^{(2,0)}M$. 

In the following we present the basic stochastic integrals on manifolds and some useful formulas (see M. Emery \cite{emery1}, P. Meyer \cite{meyer1}  and E. Hsu \cite{hsu1})

We denote by $\int \alpha\; \delta X$ the
Stratonovich integral of $\alpha$ along $X$, by $\int \alpha\;
d^{\nabla} X$ the It\^o integral of $\alpha$ along $X$, and by $\int b \;d(X,X)$ the
quadratic integral of $b$ along $X$.

The Stratonovich-It\^o conversion formula is given by:
\begin{equation}\label{conversion}
\int_0^t\alpha \delta X=\int_0^t\alpha d^{\nabla}
X+\frac{1}{2}\int_0^t\nabla\alpha\;(dX,dX).
\end{equation}

Let $M$ and $N$ be manifolds, and let $\nabla$ and $\nabla^{\prime}$ be connections of $M$ and $N$, respectively. Assume that $\alpha$ is a section of $TN^*$, $b$ is a section of $T^{(2,0)}N$, and $F:M\rightarrow N$ is a
smooth map. We have the following It\^ o formulas:
\begin{equation}\label{stratonovich-ito}
\int_0^t\alpha \;\delta F(X)=\int_0^tF^*\alpha \;\delta X
\end{equation}
,
\begin{equation}\label{quadratic-ito}
\int_0^tb\;(dF(X),dF(X))=\int_0^tF^*b\;(dX,dX).
\end{equation}
and
\begin{equation}\label{ito-ito}
\int_0^t\alpha \;d^{\nabla^{\prime}} F(X)=\int_0^tF^*\alpha
\;d^{\nabla} X +\frac{1}{2}\int_0^t\beta_F^*\alpha \;(dX,dX),
\end{equation}
where $\beta_F$ is the second fundamental form of $F$ (see
\cite{cat} for more details).

We recall that $X$ is a
$\nabla$-martingale if and only if $\int \alpha \;d^ {\nabla} X$
is a local martingale for any $\alpha \in \Gamma(TM^*)$.  It follows that $F$ is an affine
map if and only if  sends $\nabla$-martingales to
$\nabla^{\prime}$-martingales.

We assume that $M$ is a Riemannian manifold equipped with a metric $g$. The Brownian motion on $M$ is usually introduced has a diffusion generated by the half of the Laplace-Beltrami operator, but is convenient for us to consider the following equivalent definition of the Brownian motion on $M$ (see for instance M. Emery \cite{emery1} and P. Meyer \cite{meyer1}).  Let
$B$ be a continuous semimartingale with values in $M$, we say that
$B$ is a $g$-Brownian motion in $M$ if $B$ is a martingale with
respect to the Levi-Civita connection of $g$ and for any section
$b$ of $T^{(2,0)}M$ we have that
\begin{equation}\label{Brownian}
\int_0^tb(dB,dB)=\int_0^ttr\;b_{B_s}ds.
\end{equation}
Combining (\ref{stratonovich-ito}) with the property (\ref{Brownian}) we have
\begin{equation}\label{manabe}
\int_0^t\alpha \delta B=\int_0^t\alpha d^{\nabla}
B+\frac{1}{2}\int_0^td^*\alpha_{B_s}ds.
\end{equation}
From (\ref{ito-ito}) and (\ref{Brownian}) we get
\begin{equation}\label{harmonic}
\int_0^t\alpha d^{\nabla^{\prime}} F(B)=\int_0^tF^*\alpha
d^{\nabla} B +\frac{1}{2}\int_0^t\tau_F^*\alpha_{B_s} ds,
\end{equation}
where $\tau_F$ is the tension field of $F$.

We conclude this short introduction to the stochastic calculus on manifolds with the following characterization of harmonic maps. Let $F:M \rightarrow N$ be a differentiable map. Then $F$ is a 
harmonic map, this is $\tau_F=0$, if and
only if $F$ sends Brownian motions to
$\nabla^{\prime}$-martingales.

\section{Martingales on principal fiber bundles}
\noindent In this section we prove our main results.

\begin{theorem}\label{teo1}
Let $P(M,G)$ be a principal fiber bundle, and let $\omega$ be a connection
form on $P(M,G)$. Assume that $\nabla^{P}$ is a projectable connection with
projection $\nabla^{M}$ and that $Y$ is a continuous semimartingale
with values in $P$. Then $Y$ is a $\nabla^{P}$-martingale if and
only if
\begin{equation}\label{te1}
\int \omega \;\delta Y - \frac{1}{2}\int (\nabla^{P}\omega)(dY,dY)
\end{equation}
is a local martingale and
\begin{equation}\label{te2}
\int \alpha \;d^{\nabla^{M}} \pi \circ Y + \frac{1}{2} \int \alpha
\circ \pi_* \circ (2A^S+T^S)(dY,dY)
\end{equation}
is a local martingale for all $\alpha \in \Gamma(T^*M)$.
\end{theorem}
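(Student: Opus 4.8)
The plan is to characterize $\nabla^P$-martingales by testing against a spanning family of $1$-forms on $P$. Recall that $Y$ is a $\nabla^P$-martingale if and only if $\int \theta\, d^{\nabla^P} Y$ is a local martingale for every $\theta \in \Gamma(T^*P)$. At each point $p \in P$ the cotangent space $T_p^*P$ decomposes dually to $T_pP = H_pP \oplus V_pP$; the horizontal covectors are exactly $\pi^*\alpha$ for $\alpha \in \Gamma(T^*M)$, and the vertical covectors are captured by composing $\omega$ with elements of $\mathfrak{g}^*$. So it suffices to show that $\int \pi^*\alpha\, d^{\nabla^P} Y$ is a local martingale for all $\alpha$ precisely when \eqref{te2} is a local martingale, and that $\int \langle \xi, \omega\rangle\, d^{\nabla^P} Y$ is a local martingale for all $\xi \in \mathfrak{g}^*$ precisely when \eqref{te1} is (the $\mathfrak{g}$-valued integral in \eqref{te1} being a local martingale means each scalar component is).

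For the vertical part, I would start from the Stratonovich--Itô conversion formula \eqref{conversion} applied on $P$ with the $\mathfrak{g}$-valued form $\omega$:
\[
\int \omega\, \delta Y = \int \omega\, d^{\nabla^P} Y + \frac{1}{2}\int (\nabla^P\omega)(dY,dY).
\]
Hence $\int \omega\, d^{\nabla^P} Y$ is a local martingale if and only if $\int \omega\, \delta Y - \frac{1}{2}\int(\nabla^P\omega)(dY,dY)$ is, which is exactly \eqref{te1}. This handles the vertical directions essentially for free.

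For the horizontal part, apply the Itô formula \eqref{ito-ito} to the smooth map $\pi : P \to M$ (with connections $\nabla^P$ on $P$ and $\nabla^M$ on $M$): for $\alpha \in \Gamma(T^*M)$,
\[
\int \alpha\, d^{\nabla^M}\pi\circ Y = \int \pi^*\alpha\, d^{\nabla^P} Y + \frac{1}{2}\int \beta_\pi^*\alpha\,(dY,dY),
\]
where $\beta_\pi$ is the second fundamental form of $\pi$. Thus $\int \pi^*\alpha\, d^{\nabla^P} Y$ is a local martingale for all $\alpha$ iff $\int \alpha\, d^{\nabla^M}\pi\circ Y - \frac{1}{2}\int \beta_\pi^*\alpha\,(dY,dY)$ is. The main obstacle — and the computational heart of the proof — is to identify $\beta_\pi$ in terms of the fundamental tensors: one must show that, as a symmetric bilinear form on $TP$ composed with $\pi_*$, one has $\beta_\pi = -\pi_* \circ (2A^S + T^S)$, where $A^S, T^S$ denote the symmetrizations of $A, T$. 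This is where projectability is used: since $\mathbf{h}\nabla^P_{X^h}Y^h = (\nabla^M_X Y)^h$, the horizontal--horizontal component of $\beta_\pi$ vanishes, and the remaining components (horizontal--vertical and vertical--vertical) are read off from the decomposition formulas in \eqref{grupo2}, namely $\nabla^P_{X^h}Y^h = \mathbf{h}\nabla^P_{X^h}Y^h + A_{X^h}Y^h$, $\nabla^P_{X^h}B^* = \mathbf{v}\nabla^P_{X^h}B^* + A_{X^h}B^*$, and $\nabla^P_{B^*}C^* = \widehat{\nabla}_{B^*}C^* + T_{B^*}C^*$; applying $\pi_*$ kills the vertical terms and $(\nabla^M_X Y)^h$ contributes the drift that defines $\nabla^M$, leaving exactly the $A$ and $T$ contributions. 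Since only the symmetric part of a bilinear form contributes to a quadratic Itô integral, the answer appears with the symmetrized tensors $2A^S + T^S$ (the factor $2$ coming from the two off-diagonal horizontal--vertical blocks $A_{X^h}B^*$). Combining the vertical and horizontal conclusions, and noting that the full family $\{\pi^*\alpha\} \cup \{\langle\xi,\omega\rangle\}$ spans $T^*_pP$ pointwise so that testing against it detects the local-martingale property, gives the equivalence. \eop
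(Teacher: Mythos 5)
Your proposal is correct and follows essentially the same route as the paper's proof: Stratonovich--It\^o conversion for the vertical ($\omega$) part, the geometric It\^o formula for $\pi$ together with the identity $\beta_{\pi}^{*}\alpha=-\alpha\circ\pi_{*}\circ(2A^{S}+T^{S})$ for the horizontal part, and the observation that $\omega$ and the forms $\pi^{*}\alpha$ generate $\Gamma(T^{*}P)$ as a $C^{\infty}(P)$-module so that testing against them suffices. The paper likewise states the identification of $\beta_{\pi}$ without detailed computation, so your sketch of that step is at the same level of rigor as the original.
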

\begin{proof}
Let $Y$ be a $\nabla^{P}$-martingale. By the conversion formula (\ref{conversion}), we have
\[
\int \omega \;\delta Y = \int \omega d^ {\nabla^{P}} Y +
\frac{1}{2}\int (\nabla^{P}\omega)(dY,dY).
\]

\noindent Since $\int \omega \;d^ {\nabla^{P}} Y$ is a local
martingale, so is $\int \omega \;\delta Y-\frac{1}{2}\int
(\nabla^{P}\omega)(dY,dY)$. In order to
prove (\ref{te2}), we take $\alpha \in \Gamma(T^*M)$. It is easy
to check that
\begin{equation}\label{sff}
\beta_{\pi}^*\alpha = \alpha \circ \beta_{\pi} = -\alpha \circ
\pi_* \circ (2A^S + T^S).
\end{equation}
Combining the above identity and It\^o formula we conclude that
\[
\int \alpha \;d^{\nabla^{M}} \pi \circ Y + \frac{1}{2} \int \alpha
\circ \pi_* \circ (2A^S+T^S)(dY,dY)
\]
is the local martingale  $\int \pi^* \alpha\;d^{\nabla^{P}} Y$.

Conversely, take $\eta$ in $\Gamma(T^*P)$. Since the
$\mathcal{C}^{\infty}$-module $\Gamma(T^*P)$ is generated by
$\omega$ and by the differential forms $\pi^* \alpha$ with $\alpha
\in \Gamma(T^*M)$, we have that $\eta$ is a linear combination of
 differential forms $f\pi^* \alpha$ and $h\omega$ with $f,h \in
\mathcal{C}^{\infty}(P)$. It is clear that $\int h\omega \;
d^{\nabla^P}Y=\int h(Y) \; d(\int \omega \; d^{\nabla^P}Y)$ is a
local martingale and that
\[
\int f \pi^* \alpha \; d^{\nabla^P}Y=\int f(Y) \; d(\int \pi^*
\alpha \; d^{\nabla^P}Y).
\]
Hence, in order to show that $\int \eta \; d^{\nabla^P}Y$ is a
local martingale, it is sufficient to show that $\int \pi^* \alpha
\; d^{\nabla^P}Y$ is a local martingale. Applying the It\^o
formula (\ref{ito-ito}) and (\ref{sff}) we deduce that

\begin{eqnarray*}
\int \pi^* \alpha \; d^{\nabla^P}Y & = & \int \alpha \;
d^{\nabla^M}\pi \circ Y - \frac{1}{2} \int \beta_{\pi}^* \alpha
(dY, dY)\\
& = & \int \alpha \;d^{\nabla^{M}} \pi \circ Y + \frac{1}{2} \int
\alpha \circ \pi_* \circ (2A^S+T^S)(dY,dY).
\end{eqnarray*}
This completes the proof.

\end{proof} \eop

\begin{theorem}\label{teo2} Let $P(M,G)$ be a principal fiber bundle, and let $\omega$ be a connection
form on $P(M,G)$. Assume that $\nabla^{P}$ is a projectable connection with
projection $\nabla^{M}$ and that $N$ is a Riemannian manifold with
metric $g$. Furthermore, let $F:N \rightarrow P$ be a smooth map. Then $F$ is a
harmonic map if and only if
\begin{equation}
d^{*}F^{*} \omega = trF^{*}(\nabla^{P}\omega).
\end{equation}
and
\begin{equation}
\tau_{\pi \circ F} = -\mathrm{tr}~ \pi_* \circ (2A^S+T^S) \circ
F_* \otimes F_*.
\end{equation}
\end{theorem}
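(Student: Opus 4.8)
The plan is to reduce Theorem~\ref{teo2} to Theorem~\ref{teo1} via the probabilistic characterization of harmonic maps recalled at the end of Section~2: a smooth map $F\colon N\to P$ is harmonic if and only if it sends $g$-Brownian motions to $\nabla^{P}$-martingales. So I would fix a $g$-Brownian motion $B$ on $N$, set $Y=F(B)$, and apply Theorem~\ref{teo1} to $Y$, translating the two local-martingale conditions there into pointwise identities on $N$ by means of the Brownian motion formulas \eqref{manabe}, \eqref{harmonic}, \eqref{stratonovich-ito} and \eqref{quadratic-ito}.

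Concretely, the first step is the connection-form condition. By \eqref{stratonovich-ito}, $\int\omega\,\delta Y=\int F^{*}\omega\,\delta B$, and by \eqref{quadratic-ito}, $\int(\nabla^{P}\omega)(dY,dY)=\int F^{*}(\nabla^{P}\omega)(dB,dB)=\int \mathrm{tr}\,F^{*}(\nabla^{P}\omega)_{B_s}\,ds$ using \eqref{Brownian}; while $\int F^{*}\omega\,\delta B=\int F^{*}\omega\,d^{\nabla}B+\tfrac12\int d^{*}(F^{*}\omega)_{B_s}\,ds$ by \eqref{manabe} applied componentwise in $\mathfrak g$. Since $\int F^{*}\omega\,d^{\nabla}B$ is a local martingale precisely when the drift vanishes, the expression \eqref{te1} is a local martingale for every starting point and every $B$ iff $d^{*}F^{*}\omega=\mathrm{tr}\,F^{*}(\nabla^{P}\omega)$; the "for all $B$, all starting points'' quantifier is what upgrades the almost-sure statement to the pointwise PDE, exactly as in the harmonic-map characterization.

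The second step is the base condition. Here $\pi\circ Y=(\pi\circ F)(B)$, so by \eqref{harmonic} applied to the map $\pi\circ F\colon N\to M$ with connection $\nabla^{M}$, $\int\alpha\,d^{\nabla^{M}}\pi\circ Y=\int(\pi\circ F)^{*}\alpha\,d^{\nabla}B+\tfrac12\int\tau_{\pi\circ F}^{*}\alpha_{B_s}\,ds$, while by \eqref{quadratic-ito} and \eqref{Brownian}, $\int\alpha\circ\pi_{*}\circ(2A^{S}+T^{S})(dY,dY)=\int\mathrm{tr}\,\big(\alpha\circ\pi_{*}\circ(2A^{S}+T^{S})\circ F_{*}\otimes F_{*}\big)_{B_s}\,ds$. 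Combining, the drift of \eqref{te2} is $\tfrac12\int\big(\tau_{\pi\circ F}^{*}\alpha+\mathrm{tr}\,\alpha\circ\pi_{*}\circ(2A^{S}+T^{S})\circ F_{*}\otimes F_{*}\big)_{B_s}\,ds$, so \eqref{te2} is a local martingale for all $\alpha$, all $B$, all starting points iff $\tau_{\pi\circ F}=-\mathrm{tr}\,\pi_{*}\circ(2A^{S}+T^{S})\circ F_{*}\otimes F_{*}$. Putting the two steps together with Theorem~\ref{teo1}: $F$ harmonic $\iff$ $F$ maps every Brownian motion to a $\nabla^{P}$-martingale $\iff$ both \eqref{te1} and \eqref{te2} are local martingales for all such $Y=F(B)$ $\iff$ the two displayed identities hold.

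The main obstacle I anticipate is bookkeeping rather than conceptual: one must be careful that the equivalences "local martingale $\iff$ drift vanishes'' are being used with the correct universal quantifier over starting points and over all $g$-Brownian motions, since a single Brownian path only sees the drift on a set of full measure; the standard fix is to invoke that through any point of $N$ there passes a $g$-Brownian motion and to use the continuity of all the tensorial expressions involved, so that the almost-everywhere vanishing of a continuous $ds$-density forces its identical vanishing. A secondary point requiring care is the identity \eqref{sff} relating $\beta_{\pi}$ to $-\pi_{*}\circ(2A^{S}+T^{S})$, which is already established in the proof of Theorem~\ref{teo1} and which I would simply cite, together with the symmetrization conventions $A^{S}$, $T^{S}$ implicit in the quadratic integrals.
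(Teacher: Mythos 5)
Your proposal is correct and follows essentially the same route as the paper: both reduce Theorem~\ref{teo2} to Theorem~\ref{teo1} via the Bismut characterization of harmonic maps, rewrite the two local-martingale conditions for $Y=F(B)$ using \eqref{stratonovich-ito}, \eqref{quadratic-ito}, \eqref{Brownian}, \eqref{manabe} and \eqref{harmonic}, and identify the vanishing of the resulting drifts (for arbitrary $B$ and $\alpha$) with the two tensorial identities. Your extra remark on upgrading the almost-everywhere vanishing of the drift density to a pointwise identity is exactly the point the paper handles implicitly with ``since $B$ is arbitrary.''
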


\begin{proof}
\noindent Let $F$ be an harmonic map and $B$ be a $g$-Brownian
motion. From the Bismut characterization of harmonic maps and
Theorem \ref{teo1} we see that
\begin{equation}\label{eq1}
 \int \omega \delta F(B) - \frac{1}{2}\int (\nabla^{P}\omega)\;(dF(B),dF(B))
\end{equation}
is a local martingale. Applying (\ref{stratonovich-ito}) and (\ref{quadratic-ito}) we can rewrite (\ref{eq1}) as
\begin{equation}\label{eq2}
\int F^{*}\omega \delta B - \frac{1}{2}\int
F^{*}(\nabla^{P}\omega)(dB,dB).
\end{equation}
From the Manabe formula (\ref{manabe}) we have
\begin{equation}\label{eq3}
\int F^*\omega \delta B =  \int F^{*} \omega d^{\nabla^{g}}B+
\frac{1}{2}\int d^{*}F^{*}\omega_{B_{s}}\; ds,
\end{equation}
$\nabla^{g}$ being the Levi-Civita connection associated to
$g$. Combining (\ref{eq2}) and (\ref{eq3}) we conclude that
\begin{equation}\label{eq4}
\int \omega d^{\nabla^{P}} F(B) + \frac{1}{2}\int
(d^{*}F^{*}\omega - tr F^{*}(\nabla^{P}\omega))_{B_{s}}\; ds
\end{equation}
is a local martingale. Doob-Meyer decomposition now yields
\[
\int (d^{*}F^{*} \omega - tr F^{*}(\nabla^{P}\omega))_{B_s}\;
ds=0.
\]
Since $B$ is arbitrary, it follows that $d^{*}F^{*} \omega =
trF^{*}(\nabla^{P}\omega)$.

It remains to prove that $\tau_{\pi \circ F} = -\mathrm{tr}~ \pi_*
\circ (2A^S+T^S) \circ F_* \otimes F_*$. As $F$ is an harmonic
map, it follows that $F(B)$ is a $\nabla^{P}$-martingale. From
Theorem \ref{teo1}, we see that
\[
\int \alpha \;d^{\nabla^{M}} \pi \circ F(B) + \frac{1}{2} \int
\alpha \circ \pi_* \circ (2A^S+T^S)(dF(B),dF(B))
\]
is a local martingale for all $\alpha \in \Gamma(T^*M)$. Applying (\ref{Brownian}) and (\ref{harmonic}) we conclude that
\[
\int (\pi \circ F)^*\alpha \;d^{\nabla^{g}} B + \frac{1}{2} \int
(\tau_{\pi \circ F}^* \alpha + \mathrm{tr}~\alpha \circ \pi_*
\circ (2A^S+T^S)\circ F_* \otimes F_*)_{B_s}ds
\]
is a local martingale. Since $B$ and $\alpha \in \Gamma(T^*M)$ are
arbitrary, we have
\[
\tau_{\pi \circ F} = -\mathrm{tr}~ \pi_* \circ (2A^S+T^S) \circ
F_* \otimes F_*.
\]

Conversely, suppose that $B$ is a $g$-Brownian motion. From the
Bismut characterization is sufficient to show that $F(B)$ is a
$\nabla^P$-martingale. From what proved, it can be written
$ \int \omega  \delta F(B)- \frac{1}{2}\int (\nabla^{P}\omega)\;(dF(B),dF(B))$
as
\[
\int F^* \omega d^{\nabla^g}B+ \frac{1}{2}\int (d^*F^* \omega -
\mathrm{tr} F^*(\nabla^{P}\omega))_{B_s}\; ds.
\]
Since $d^*F^* \omega = \mathrm{tr} F^*(\nabla^{P}\omega)$, it
follows that
\[
\int \omega \delta F(B) - \frac{1}{2}\int
(\nabla^{P}\omega)\;(dF(B),dF(B))
\]
is a local martingale. It remains to prove that
\begin{equation}\label{semi}
\int \alpha \;d^{\nabla^{M}} \pi \circ F(B) - \frac{1}{2} \int
\alpha \circ \pi_* \circ (A+T)(dF(B),dF(B))
\end{equation}
is a local martingale for all $\alpha \in
\Gamma(T^*M)$. An straightforward calculation shows that
we can rewrite the semimartingale (\ref{semi}) as
\[
\int (\pi \circ F)^*\alpha \;d^{\nabla^{g}} \pi \circ B
+\frac{1}{2}\int (\tau_{\pi \circ F}^*\alpha - \alpha \circ
\mathrm{tr}~ \pi_* \circ (A+T) \circ F_* \otimes F_*)_{B_s} ds,
\]
and so (\ref{semi}) is a local martingale.
Therefore $F(B)$ is an $\nabla^P$-martingale by Theorem
\ref{teo1}. \eop
\end{proof}

\begin{example}
Let $M$ be a differentiable manifold, and let $\nabla$ be a connection
on $M$. We consider the frame bundle $BM(M,GL(\mathbb{R}^n))$
which is endowed with $\nabla^c$ and $\nabla^h$, the canonical lift and
horizontal lift of $\nabla$, respectively. Let $\omega$ be the
connection form on $BM$ which is associated with $\nabla$. The following assertions are true.

1) $T=0$ and $\pi_* \circ A=0$ for $\nabla^c$ and $\nabla^h$.

2) The symmetric part of $\nabla^h\omega$ is $-\omega \odot
\omega$.

3) The symmetric part of $\nabla^c\omega$ is $-\omega \odot \omega
+a^c$ (see \cite{cat-stel} for the definition of $a^c$).

Applying Theorem \ref{teo1} and Theorem \ref{teo2} we
recovering the main results of \cite{cat-stel}. A $BM$-valued
semimartingale $Y$ is a $\nabla^{h}$-martingale (
$\nabla^{c}$-martingale) if and only if $\pi \circ Y$ is a $\nabla$-martingale in $M$ and
\[
\int \omega \;\delta Y+\frac{1}{2}\int (\omega \odot \omega)(dY,dY)
\]
is martingale local ($\pi \circ Y$ is a $\nabla^{M}$-martingale in $M$ and
\[
\int \omega \;\delta
Y+\frac{1}{2}\int\omega\odot\omega(dY,dY)+\frac{1}{2}\int
a^c\;(dY,dY)
\]
is martingale local). Furthermore, $F:N \rightarrow BM$ is
$(g,\nabla^{h})$-harmonic map ($(g,\nabla^{c})$-harmonic map) if
and only if $\pi \circ F$ is a $(g,\nabla)$-harmonic map and $d^{*}F^{*}
\omega+ trF^{*}(\omega \odot\omega)=0$ ($\pi \circ F$ is a $(g,\nabla)$-harmonic map  and $d^{*}F^{*}
\omega+ trF^{*}(\omega \odot\omega +a^c)=0$).

\end{example}

\begin{corollary}\label{coro1}
Let $P(M,G)$ be a principal fiber bundle, and let $\omega$ be a connection
form on $P$. Assume that $k_0$ is the $ad(G)$-invariant metric on $\mathfrak{g}$
and that $h$ is a Riemannian metric on $M$. We consider the
$G$-invariant Riemannian metric $k$ on $P$ defined by
\begin{equation}\label{KK2}
	k(U,V)=h(\pi_*(U),\pi_*(V))+k_0(\omega(U), \omega(V))
\end{equation}
for all $U$ and $V$ vector fields on $P$. Let us denote by $\nabla^k$ the
Riemannian connection associated to $k$ and by $\nabla$ the one associated to $h$. We have the following
assertions:
\begin{enumerate}
\item A $P$-valued semimartingale $Y$ is a $\nabla^{k}$-martingale
if and only if

1) $\int \omega \;\delta Y$ is a local martingale; and,

2) $\int \alpha \;d^{\nabla} \pi \circ Y - \frac{1}{2} \int \alpha
\circ \pi_* \circ A(dY,dY)$ is a local martingale.

\item Let $N$ be a Riemannian manifold with metric $\bar{g}$. A smooth map $F:N \rightarrow P(M,G)$ is  a
 $(\bar{g},\nabla^{P})$-harmonic map if and only if

1) $d^{*}F^{*} \omega=0$; and,

2) $\tau_{\pi \circ F} = \mathrm{tr}~ \pi_* \circ A \circ F_*
\otimes F_*$.

\end{enumerate}
\end{corollary}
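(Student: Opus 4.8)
The plan is to derive Corollary \ref{coro1} as a direct specialization of Theorems \ref{teo1} and \ref{teo2} to the Kaluza--Klein metric $k$ of Example 2.2, using the geometric data computed there. Recall from that example that for the metric $k$ given by (\ref{KK2}) the fundamental tensor $T$ vanishes identically, $A_{X^h}X^h=0$, and $A_{X^h}B^*=-\frac12 k_0(B,\Omega(-,X^h))^{\sharp}$; moreover $\pi:(P,k)\to(M,h)$ is a Riemannian submersion and $\nabla^k$ is projectable with projection $\nabla$, the Levi--Civita connection of $h$. The two inputs I still need are: (i) that the symmetrized obstruction tensor $\pi_*\circ(2A^S+T^S)$ reduces, after projection by $\pi_*$, to $-\pi_*\circ A$ (up to the sign bookkeeping that appears in the proof of Theorem \ref{teo2}, where the term $(A+T)$ is used), because the vertical component $A_{X^h}B^*$ is killed by $\pi_*$ and $A_{X^h}Y^h$ is skew in its horizontal arguments by the third line of (\ref{grupo1}) with $\Omega$ horizontal-valued; and (ii) that the symmetric part of $\nabla^k\omega$ vanishes, i.e. $(\nabla^k\omega)^S=0$, so that $\int(\nabla^k\omega)(dY,dY)=0$ for every semimartingale $Y$ (the quadratic integral only sees the symmetric part).

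First I would establish (ii): the connection form $\omega$ is parallel in the symmetrized sense for the Kaluza--Klein connection because $k(U,V)=h(\pi_*U,\pi_*V)+k_0(\omega U,\omega V)$ is $\nabla^k$-parallel and the vertical distribution is $\nabla^k$-invariant in the relevant symmetric sense; concretely, one computes $(\nabla^k_{X^h}\omega)(Y^h)$, $(\nabla^k_{X^h}\omega)(B^*)+(\nabla^k_{B^*}\omega)(X^h)$, and $(\nabla^k_{B^*}\omega)(C^*)+(\nabla^k_{C^*}\omega)(B^*)$ using the O'Neill-type formulas (\ref{grupo2}), the identities (\ref{grupo1}), and the facts $T=0$, $A_{X^h}X^h=0$, $\omega(X^h)=0$, $\omega(B^*)=B$; each symmetrized expression collapses to $0$. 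This is the step I expect to be the main obstacle, since it is the one genuinely new computation, though it is a finite linear-algebra check analogous to the frame-bundle computation in Example 3.1(2) (there the answer was $-\omega\odot\omega$; here the extra structure of $k_0$ together with $A_{X^h}X^h=0$ forces the $-\omega\odot\omega$ term to disappear).

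Second, I would plug these facts into Theorem \ref{teo1}. With $(\nabla^k\omega)^S=0$ the first condition (\ref{te1}) becomes simply ``$\int\omega\,\delta Y$ is a local martingale,'' which is item 1. With $T=0$ and $\pi_*\circ A$ skew-symmetric in its horizontal slots (so that $\pi_*\circ(2A^S)$ applied to $(dY,dY)$ equals $\pi_*\circ A$ applied to $(dY,dY)$ up to the factor and sign appearing in the theorem's proof, giving the $-\frac12\int\alpha\circ\pi_*\circ A(dY,dY)$ in the statement), the second condition (\ref{te2}) becomes exactly item 2. Thus part 1 of the corollary follows. For part 2, I would run the same substitution through Theorem \ref{teo2}: $(\nabla^k\omega)^S=0$ turns $d^*F^*\omega=\mathrm{tr}\,F^*(\nabla^k\omega)$ into $d^*F^*\omega=0$, which is item 1, and $T=0$ together with the reduction of $\pi_*\circ(2A^S+T^S)$ to $-\pi_*\circ A$ turns $\tau_{\pi\circ F}=-\mathrm{tr}\,\pi_*\circ(2A^S+T^S)\circ F_*\otimes F_*$ into $\tau_{\pi\circ F}=\mathrm{tr}\,\pi_*\circ A\circ F_*\otimes F_*$, which is item 2. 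I would close by noting that since $\nabla^k$ is projectable with projection $\nabla$ (the remark following Example 2.1), Theorems \ref{teo1} and \ref{teo2} apply verbatim with $\nabla^P=\nabla^k$ and $\nabla^M=\nabla$, so no further hypothesis checking is needed. \eop
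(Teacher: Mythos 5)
Your proposal follows essentially the same route as the paper: specialize Theorems \ref{teo1} and \ref{teo2} using $T=0$ together with a componentwise check via (\ref{grupo2}) that the symmetric part of $\nabla^k\omega$ vanishes (the $(B^*,C^*)$-component being $-\frac{1}{2}[B,C]$, hence antisymmetric, and the $(X^h,Y^h)$-component being $-\omega(A_{X^h}Y^h)$, which is antisymmetric because $A_{Z^h}Z^h=0$). One small slip worth fixing: $A_{X^h}B^*$ is the \emph{horizontal} component of $A$ (it is precisely what survives under $\pi_*$), while $A_{X^h}Y^h$ is vertical and hence killed by $\pi_*$ --- you have these two swapped in your justification of step (i) --- but this mislabeling does not affect the argument, and the residual sign/factor mismatch between the theorem's $\frac{1}{2}(2A^S+T^S)$ and the corollary's $-\frac{1}{2}A$ is an inconsistency already present in the paper rather than a defect of your proof.
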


\begin{proof}
Since $T=0$, it is sufficient to show that the symmetric part of
$\nabla^k \omega$ is zero. From (\ref{grupo2}) an easy
calculations shows that
\begin{displaymath}
\begin{array}{ccccl}
\nabla^k \omega (B^*,X^h) & = & -\omega(\nabla^k_{B^*}X^h) & = & 0 \\
\nabla^k \omega (X^h,B^*) & = & -\omega(\nabla^k_{X^h}B^*) & = & 0 \\
\nabla^k \omega (B^*,C^*) & = & -\omega(\nabla^k_{B^*}C^*) & = & -\frac{1}{2}[B,C] \\
\nabla^k \omega (X^h,Y^h)& = & -\omega(\nabla^k_{X^h}Y^h) & = &
-\omega(A_{X^h}Y^h).
\end{array}
\end{displaymath}
According to (\ref{KK2}), we have $A_{Z^h}Z^h=0$ for all $Z$
vector field on $M$. It follows that the symmetric part of
$\nabla^k \omega$ is zero. \eop
\end{proof}

\begin{remark}
M. Arnaudon and S. Paycha, in \cite{arnaudon-paycha}, shows that
semimartingales in a principal fiber bundle $P(M,G)$ with $G$-invariant Riemannian me-tric $k$ can be decomposed into $G$-
and $M$- valued semimartingales. More precisely, a semimartingale
$Y$ with values in $P(M,G)$ splits in a unique way into a
horizontal semimartingale $\widetilde{Y}$ and a semimartingale $V$
with values in $G$ such that
\[
Y= \widetilde{Y} \cdot V.
\]
Moreover, $V$ is the stochastic exponential
\[
V = \epsilon(\int \omega \delta Y)
\]
and $\widetilde{Y}$ is the solution of the It\^o equation
\[
d^{\nabla^k}\widetilde{Y}=H^k_{\widetilde{Y}}d^{\nabla}(\pi \circ
Y).
\]
It follows that $\widetilde{Y}$ is a $\nabla^{k}$-martingale if
and only if $\pi \circ Y$ is a $\nabla$-martingale. In the case
that $k$ is given by (\ref{KK2}), Corollary \ref{coro1} shows that
if $Y$ is a $\nabla^{k}$-martingale then $V$ is a $G$-martingale.

Finally, we consider $Y$ a solution of the It\^o
equation
\[
d^{\nabla^k}Y = \sum_{i=1}^nE_i(Y)dB^i,
\]
where $(B^1,...,B^n)$ is a Brownian motion in $\mathbb{R}^n$ and
the $E_i$ are vertical or horizontal vector fields on $P$. It is
clear that $Y$ is a $\nabla^{k}$-martingale and follows easily
that $\widetilde{Y}$ is a $\nabla^{k}$-martingale and $V$ is a
$G$-martingale.

\noindent
\end{remark}

\end{document}